\let\pa\partial  
\let\na\nabla  
\let\eps\varepsilon  
\newcommand{\R}{{\mathbb R}} 
\newcommand{\diver}{\operatorname{div}}
\newtheorem{theorem}{Theorem}   
\newtheorem{lemma}[theorem]{Lemma}   
\newtheorem{remark}[theorem]{Remark}   
\newtheorem{corollary}[theorem]{Corollary}
\begin{document}  

\title[Boundedness of weak solutions]{Boundedness of weak solutions
to cross-diffusion systems from population dynamics}

\author{Ansgar J\"ungel}
\address{Institute for Analysis and Scientific Computing, Vienna University of  
	Technology, Wiedner Hauptstra\ss e 8--10, 1040 Wien, Austria}
\email{juengel@tuwien.ac.at} 

\author{Nicola Zamponi}
\address{Institute for Analysis and Scientific Computing, Vienna University of  
	Technology, Wiedner Hauptstra\ss e 8--10, 1040 Wien, Austria}
\email{nicola.zamponi@tuwien.ac.at}

\date{\today}

\thanks{The authors acknowledge partial support from   
the Austrian Science Fund (FWF), grants P22108, P24304, and W1245, and    
the Austrian-French Program of the Austrian Exchange Service (\"OAD)} 

\begin{abstract}
The global-in-time existence of nonnegative bounded weak solutions 
to a class of cross-diffusion systems for two population species is proved. 
The diffusivities are assumed to depend linearly on the population 
densities in such a way that a certain formal gradient-flow structure holds.
The main feature of these systems is 
that the diffusion matrix may be neither symmetric nor positive definite.
The key idea of the proof is to employ the boundedness-by-entropy principle 
which yields at the same time the existence of global weak solutions and 
their boundedness. In particular, the uniform boundedness of weak solutions
to the population model of Shigesada, Kawasaki, and Teramoto in several space 
dimensions under certain conditions on the diffusivities is shown
for the first time.
\end{abstract}

\keywords{Strongly coupled parabolic systems, population dynamics,
uniform boundedness of weak solutions, gradient-flow structure, entropy method.}  
 
\subjclass[2000]{35K51, 35Q92, 92D25.}  

\maketitle


\section{Introduction}\label{sec.intro}

Many multi-species systems in biology, chemistry, and physics can be described
by reaction-diffusion systems with cross-diffusion effects. The independent 
variables usually represent densities or concentrations of the species and thus,
they should be nonnegative and bounded. 
However, the proof of these properties is a challenging problem since maximum 
principle arguments generally cannot be applied to such systems. In fact, it is 
well known that weak solutions may be unbounded \cite{StJo95}. 
A further mathematical
difficulty comes from the fact that many systems have diffusion matrices which
are neither symmetric nor positive definite such that even the local-in-time
existence of solutions may be a nontrivial task.

Recently, a systematic method has been developed to overcome these difficulties for 
cross-diffusion systems which possess a formal gradient-flow structure
\cite{Jue14}. The so-called bound\-ed\-ness-by-entropy principle allows us, under
certain assumptions, to prove the existence of global-in-time 
nonnegative bounded weak solutions.
In this note, we determine a class of cross-diffusion systems, whose diffusivities
depend linearly on the solution and for which global
bounded weak solutions exist. In particular, we prove for the first time
the uniform boundedness of weak solutions to a class of population systems of 
Shigesada-Kawasaki-Teramoto type in several space dimensions \cite{SKT79}. 

More specifically, we consider reaction-diffusion systems of the form
\begin{equation}\label{1.eq}
  \pa_t u - \diver(A(u)\na u) = f(u)\quad\mbox{in }\Omega,\ t>0,
\end{equation}
subject to the homogeneous Neumann boundary and initial conditions
\begin{equation}\label{1.bic}
	(A(u)\na u)\cdot\nu=0\quad\mbox{on }\pa\Omega, \quad u(0)=u^0\quad\mbox{in }\Omega,
\end{equation}
where $u=(u_1,u_2)^\top$ represents the vector of the
densities or concentrations of the species, 
$A(u)=(A_{ij}(u))\in\R^{2\times 2}$ is the diffusion matrix,
and the reactions are modeled by the function $f=(f_1,f_2)$.
Furthermore, $\Omega\subset\R^d$ ($d\ge 1$) is a bounded domain
with Lipschitz boundary and $\nu$ is the exterior unit normal vector to
$\pa\Omega$. Our main assumption is that the diffusivities
depend linearly on the densities,
\begin{equation}\label{1.Aij}
  A_{ij}(u) = \alpha_{ij} + \beta_{ij}u_1 + \gamma_{ij}u_2. \quad i,j=1,2,
\end{equation}
where $\alpha_{ij}$, $\beta_{ij}$, $\gamma_{ij}$ are real numbers.

Such models can be formally derived from a master equation for a random walk
on a lattice in the diffusion limit with transition rates which depend
linearly on the species' densities \cite[Appendix B]{Jue14}. 
They can be also deduced as the limit equations of an interacting particle system
modeled by stochastic differential equations with interaction forces
which depend linearly on the corresponding stochastic processes
\cite{GaSe14,Oel89}. 

Cross-diffusion systems with linear diffusivities arise, for instance,
in the modeling of ion transport through narrow channels \cite{BDPS10,BSW12},
in population dynamics with complete segregation \cite{BuTr83}, and in the
population model of \cite{Dav14} with the diffusion matrix
$$
  A(u) = \begin{pmatrix} 1-u_1 & -u_1 \\ -u_2 & 1-u_2 \end{pmatrix}.
$$
The most prominent example is probably the 
population model of Shigesada, Kawasaki, and Teramoto \cite{SKT79}:
\begin{equation}\label{1.SKT}
  A(u) = \begin{pmatrix}
	a_{10} + 2a_{11}u_1 + a_{12}u_2 & a_{12}u_1 \\
	a_{21}u_2 & a_{20} + a_{21}u_1 + 2a_{22}u_2
	\end{pmatrix},
\end{equation}
where the coefficients $a_{ij}$ are nonnegative.
The numbers $a_{11}$, $a_{22}$ are called self-diffusion coefficients, and 
$a_{12}$, $a_{21}$ are referred to as cross-diffusion constants.
In this model, the source terms in \eqref{1.eq} are given by
\begin{equation}\label{1.LV}
  f_i(u) = (b_{i0} - b_{i1}u_1 - b_{i2}u_2)u_i, \quad i=1,2,
\end{equation}
where the coefficients $b_{11}$, $b_{22}$ are the intra-specific competition
constants and $b_{12}$, $b_{21}$ the inter-specific competition coefficients. 
The existence of global weak solutions without any restriction on the
diffusivities (except positivity) was achieved in 
\cite{GGJ03} in one space dimension and in \cite{ChJu04,ChJu06} 
in several space dimensions. Existence results for
nonlinear coefficients $A_{ij}(u)$ were proved in \cite{DLM13,Jue14}.
Less results are known concerning $L^\infty$ bounds.
In one space dimension and with coefficients $a_{10}=a_{20}$, 
Shim \cite{Shi02} proved uniform upper bounds. Moreover, if cross-diffusion 
is weaker than self-diffusion (i.e.\ $a_{12}<a_{22}$, 
$a_{21}<a_{11}$), weak solutions are bounded
and H\"older continuous \cite{Le06}. The existence of global bounded solutions
in the triangular case (i.e.\ $a_{21}=0$) was shown in \cite{CLY03}.
In this note, we prove uniform $L^\infty$ bounds under more general conditions 
on the coefficients $A_{ij}$ than in previous works.

The proof of global existence and boundedness of weak solutions 
to \eqref{1.eq}-\eqref{1.bic} is based on
the bound\-ed\-ness-by-entropy principle presented in \cite{Jue14}.
The key idea is to exploit the formal gradient-flow structure of \eqref{1.eq},
\begin{equation}\label{1.gf}
  \pa_t u - \diver\left(B\na\frac{\delta{\mathcal H}}{\delta u}\right) = f(u),
\end{equation}
where $B$ is a positive semidefinite matrix, $\delta{\mathcal H}/\delta u$
is the variational derivative of the entropy ${\mathcal H }[u]=\int_\Omega h(u)dx$,
and $h$ is the entropy density, which is assumed to be a function from $D\subset\R^2$ 
to $[0,\infty)$. Identifying $\delta{\mathcal H}/\delta u$ with its Riesz
representative $Dh(u)$ (the derivative of $h$) and introducing the entropy
variable $w=Dh(u)$, the above system can be formulated as
$$
  \pa_t u - \diver(B(w)\na w) = f(u),
$$
where $B=B(w)=A(u)(D^2h(u))^{-1}$, $D^2h$ is the Hessian of $h$, and
$u=(Dh)^{-1}(w)$. This formulation makes only sense if 
$Dh:D\to\R^2$ is invertible. 

There are two consequences of this formulation. First, if $f(u)\cdot w\le 0$,
the entropy ${\mathcal H}$ is a Lyapunov functional along solutions to 
\eqref{1.eq}-\eqref{1.bic} since
$$
  \frac{d{\mathcal H}}{dt} = \int_\Omega \pa_t u\cdot wdx
	\leq -\int_\Omega \na w:B(w)\na w dx = -\int_\Omega\na u:(D^2h)A(u)\na u dx\le 0,
$$
taking into account that  $B(w)$ (or equivalently $(D^2h)A(u)$) is assumed 
to be positive semidefinite.
Here, ``:'' denotes the double-dot product with summation over both matrix indices. 
Second, because of the invertibility of $Dh$, the original variable satisfies 
$u(x,t)=(Dh)^{-1}(w(x,t))\in D$, and if $D$ is a bounded domain, we obtain
automatically $L^\infty$ bounds without the use of a maximum principle.

In this note, we define the domain $D$ as the triangle
\begin{equation}\label{1.D}
  D = \{(u_1,u_2)\in\R^2:u_1>0,\ u_2>0,\ u_1+u_2<1\}.
\end{equation}
Then our main result is as follows. 

\begin{theorem}[Bounded weak solutions to \eqref{1.eq}]\label{thm.ex}
Let $u^0=(u_1^0,u_2^0)\in L^1(\Omega;\R^2)$ be such that $u^0(x)\in D$ 
for $x\in\Omega$, let $A(u)$ be given
by \eqref{1.Aij} with coefficients satisfying
\begin{align}
  & \alpha_{12} = \alpha_{21} = \beta_{21} = \gamma_{12} = 0, \label{1.symm1} \\
	& \beta_{22} = \beta_{11}-\gamma_{21}, \quad
	\gamma_{11} = \gamma_{22}-\beta_{12}, \quad 
	\gamma_{21} = \alpha_{22} - \alpha_{11} + \beta_{12}, \label{1.symm2} \\
	& \alpha_{11}>0,\quad \alpha_{22}>0, \quad 
	\beta_{12}<\alpha_{11}+\min\{\beta_{11},\gamma_{22}\}, \quad
	\alpha_{11}+\beta_{11} \ge 0, \quad \alpha_{22}+\gamma_{22} \ge 0, \label{1.cond}
\end{align}
and let $f_i(u)=u_ig_i(u)$, where $g_i(u)$ is continuous in $\overline{D}$
and nonpositive in $\{1-\eps < u_1+u_2 < 1\}$ for some $\eps>0$ $(i=1,2)$.
Then there exists a bounded nonnegative weak solution $u=(u_1,u_2)$ to 
\eqref{1.eq}-\eqref{1.bic} satisfying $u(x,t)\in \overline{D}$ for $x\in\Omega$, $t>0$,
\begin{equation}\label{1.reg}
  u\in L^2_{\rm loc}(0,\infty;H^1(\Omega;\R^2)), \quad
	\pa_t u\in L^2_{\rm loc}(0,\infty;H^1(\Omega;\R^2)'),
\end{equation}
and the initial datum is satisfied in the sense of $H^1(\Omega;\R^2)'$.
\end{theorem}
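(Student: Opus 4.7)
The plan is to apply the boundedness-by-entropy principle of \cite{Jue14}: the task reduces to exhibiting an entropy density $h:D\to[0,\infty)$ with $Dh:D\to\R^2$ a $C^1$-diffeomorphism, verifying that $M(u):=(D^2h(u))A(u)$ is positive semidefinite on $D$, and checking a compatibility condition on the reaction $f$. For the triangular domain $D$ I would take
\[
 h(u) = u_1(\log u_1 - 1) + u_2(\log u_2 - 1) + s(\log s - 1), \qquad s := 1-u_1-u_2,
\]
which is strictly convex on $D$. Its gradient $Dh(u)=(\log(u_1/s),\log(u_2/s))$ maps $D$ diffeomorphically onto $\R^2$, and the inverse $u=(Dh)^{-1}(w)$ takes values in $D$; this is where the $L^\infty$ confinement $u(x,t)\in\overline D$ will eventually come from.

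The algebraic heart of the argument is verifying that $M(u)$ is symmetric and positive semidefinite under \eqref{1.symm1}--\eqref{1.cond}. I would first compute $M_{12}-M_{21}$ as a polynomial in $(u_1,u_2)$ and observe that its coefficients vanish precisely when the identities \eqref{1.symm1}--\eqref{1.symm2} hold, which therefore identifies those identities as the natural symmetrizers for the entropy $h$. The resulting symmetric matrix is then checked for positive semidefiniteness by the $2\times2$ criterion $M_{11},M_{22}\ge 0$ and $\det M\ge 0$ on $D$: the positivity $\alpha_{11},\alpha_{22}>0$ dominates the singular $1/u_i$ contributions, the inequalities $\alpha_{11}+\beta_{11}\ge 0$ and $\alpha_{22}+\gamma_{22}\ge 0$ rule out loss of positivity as $u_i\to 1$, and $\beta_{12}<\alpha_{11}+\min\{\beta_{11},\gamma_{22}\}$ supplies the cross-term bound needed in $\det M$. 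A quantitative by-product, useful later, is a coercivity estimate of the form
\[
 z^\top M(u) z \ge c_1\Bigl(\frac{z_1^2}{u_1}+\frac{z_2^2}{u_2}\Bigr) + \frac{c_2(z_1+z_2)^2}{s},\qquad z\in\R^2,\ u\in D,
\]
with constants $c_1,c_2>0$ depending only on the coefficients.

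With this structural verification in hand, existence follows the standard boundedness-by-entropy construction: a backward Euler discretization in time of step $\tau>0$ combined with an $H^\sigma$-regularization ($\sigma>d/2$) in the entropy variable $w$, so that the discrete elliptic problem for $w^k\in H^\sigma(\Omega;\R^2)$ can be solved by Leray--Schauder using the embedding $H^\sigma\hookrightarrow L^\infty$. Testing with $w^k$ yields a discrete entropy inequality; the reaction contribution
\[
 \int_\Omega f(u)\cdot w\,dx = \sum_{i=1}^2\int_\Omega u_i g_i(u)\log u_i\,dx - \int_\Omega \bigl(u_1 g_1(u)+u_2 g_2(u)\bigr)\log s\,dx
\]
is bounded from above because $u_i g_i\log u_i$ is bounded on $\overline D$ and the hypothesis $g_i\le 0$ in $\{1-\eps<u_1+u_2<1\}$ makes the integrand of the last integral nonnegative in the boundary layer where $\log s$ is large and negative. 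Summing over $k$ gives $\tau$- and $\eps$-uniform $L^2_tH^1_x$ bounds for $u$ via the coercivity estimate of Step~2. Passing first $\eps\to 0$ and then $\tau\to 0$ through Aubin--Lions compactness on piecewise-constant-in-time interpolants, combined with the pointwise confinement $u\in\overline D$, allows passage to the limit in the nonlinear terms $A(u)\na u$ and $f(u)$, producing a weak solution with the regularity \eqref{1.reg} and initial datum satisfied in $H^1(\Omega;\R^2)'$.

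The main obstacle is the algebra of Step~2: not only isolating the identities \eqref{1.symm2} as the exact symmetrizers of the chosen $h$, but also extracting from \eqref{1.cond} a coercivity that survives uniformly up to $\pa D$, strong enough to absorb the singular factors $1/u_i$ and $1/s$ appearing in $D^2h$. Once this is done, the remaining steps are a faithful application of the boundedness-by-entropy machinery of \cite{Jue14}, with the only novel ingredient being the sign condition on $f$ that precisely compensates the logarithmic singularity of $w$ at the face $\{u_1+u_2=1\}$.
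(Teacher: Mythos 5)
Your overall strategy is the paper's: the same entropy density $h$ on the triangle $D$, reduction to the boundedness-by-entropy theorem of \cite{Jue14}, and the same treatment of the reaction term (the sign of $g_i$ near $\{u_1+u_2=1\}$ compensating the singularity of $-\log(1-u_1-u_2)$). The existence machinery you re-derive (implicit Euler, $H^\sigma$ regularization, Leray--Schauder, Aubin--Lions) is simply quoted in the paper as \cite[Theorem 2]{Jue14}, so nothing is lost there.

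The gap is in what you yourself call the algebraic heart. You assert that $M_{11},M_{22}\ge 0$ and $\det M\ge 0$ on $D$ because the inequalities in \eqref{1.cond} ``dominate the singular contributions'', ``rule out loss of positivity'' and ``supply the cross-term bound'', but no argument is given, and none of these facts is immediate: the rescaled diagonal entries $u_1u_2(1-u_1-u_2)M_{ii}$ and $\det A$ are genuine multivariate quadratic polynomials in $(u_1,u_2)$, and their nonnegativity at the vertices, or even on all of $\pa D$, does not by itself imply nonnegativity in the interior of the triangle. Supplying exactly this implication is the paper's main technical content: it observes that $\Delta f_1=-\Delta f_2$ for the two rescaled diagonal entries, so one of them is superharmonic and the strong maximum principle forces its sign from the boundary values; and it shows that the Hessian $C$ of $\det A$ satisfies $\det C\le 0$, so $\det A$ is concave along an eigendirection of $C$ and its minimum over each segment in that direction is attained on $\pa D$, where nonnegativity is checked by hand. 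Without an argument of this type (or a full cylindrical-algebraic-decomposition computation) your Step~2 is an unproven claim. Moreover, the quantitative coercivity you record,
\[
 z^\top M(u)z\ \ge\ c_1\Bigl(\frac{z_1^2}{u_1}+\frac{z_2^2}{u_2}\Bigr)+\frac{c_2(z_1+z_2)^2}{1-u_1-u_2},
\]
is strictly stronger than what the hypotheses permit: its validity for some $c_1,c_2>0$ is equivalent to $H(u)\bigl(A(u)-c\,{\mathbb I}\bigr)$ being positive semidefinite on $D$ for some $c>0$, which by the characterization in Lemma \ref{lem.HA2} forces $\alpha_{11}+\beta_{11}>0$ and $\alpha_{22}+\gamma_{22}>0$, whereas \eqref{1.cond} only assumes these are $\ge 0$. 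The correct (and sufficient) statement is the weaker bound $z^\top M(u)z\ge\eps(z_1^2/u_1+z_2^2/u_2)$, i.e.\ Hypothesis H2 of \cite{Jue14} with $m_i=\tfrac12$; it is obtained by writing $M-\eps\,\mathrm{diag}(1/u_1,1/u_2)=H\,(A-\eps P)$ with $P$ the segregation matrix, and noting that $A-\eps P$ retains the linear structure with perturbed coefficients that still satisfy the non-strict positivity conditions for small $\eps$, thanks to the strict inequalities $\alpha_{11},\alpha_{22}>0$ and $\beta_{12}<\alpha_{11}+\min\{\beta_{11},\gamma_{22}\}$.
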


Note that the $L^\infty$ bound on $u$ is uniform in time.
The theorem also holds true if $\alpha_{11}=\alpha_{22}=0$ but
$\beta_{11}>0$ and $\gamma_{22}>0$; see Remark \ref{rem}.
The condition $u_1^0+u_2^0<1$ can be satisfied after a suitable scaling
of the positive function $u^0\in L^\infty(\Omega;\R^2)$ and is therefore not a 
restriction. The assumption on $f(u)$ guarantees that
the triangle $D$ is an invariant region under the action of the reaction terms.
If the population approaches its total capacity $u_1+u_2=1$,
the reaction terms are nonpositive and lead to a decrease of the population. 
Theorem \ref{thm.ex} generalizes the global existence result in \cite{GaSe14},
where the positive definiteness of $A$ was needed. To the best of our knowledge,
this is the first general existence result for uniformly bounded weak solutions
to cross-diffusion systems with linear diffusivities.

Choosing the diffusion matrix as in the population model \eqref{1.SKT},
we obtain the following corollary.

\begin{corollary}[Bounded weak solutions to \eqref{1.SKT}]\label{coro.ex}
Let the assumptions of Theorem \ref{thm.ex} hold except that the coefficients
of $A$, defined in \eqref{1.SKT}, are nonnegative and satisfy
$a_{10}>0$, $a_{20}>0$ as well as
\begin{equation}\label{1.a}
  a_{21} = a_{11}, \quad a_{22} = a_{12}, \quad a_{20}-a_{10} = a_{11}-a_{22}\ge 0.
\end{equation}
Furthermore, let $f(u)$ be given by the Lotka-Volterra terms \eqref{1.LV}
satisfying
\begin{equation}\label{1.b}
  b_{10} \le \min\{b_{11},b_{12}\}, \quad b_{20} \le \min\{b_{21},b_{22}\}.
\end{equation}
Then there exists a bounded weak solution $u=(u_1,u_2)$ to \eqref{1.eq}-\eqref{1.bic}
satisfying $u_1$, $u_2\ge 0$, $u_1+u_2\le 1$ in $\Omega\times(0,\infty)$, and
\eqref{1.reg}.
\end{corollary}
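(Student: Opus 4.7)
The plan is to deduce Corollary~\ref{coro.ex} as a direct application of Theorem~\ref{thm.ex}: only an algebraic matching of the SKT matrix \eqref{1.SKT} with the linear template \eqref{1.Aij} is needed, together with a verification that \eqref{1.a} encodes exactly the structural conditions \eqref{1.symm1}--\eqref{1.cond} and that \eqref{1.b} gives the desired sign of the reaction on a strip near the top edge of $D$.

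First I would read off the nonzero coefficients from \eqref{1.SKT}: $\alpha_{11}=a_{10}$, $\beta_{11}=2a_{11}$, $\gamma_{11}=a_{12}$, $\beta_{12}=a_{12}$, $\gamma_{21}=a_{21}$, $\alpha_{22}=a_{20}$, $\beta_{22}=a_{21}$, $\gamma_{22}=2a_{22}$, with all remaining constants equal to zero. Then \eqref{1.symm1} is automatic, and inserting these into \eqref{1.symm2} yields, respectively, $a_{21}=a_{11}$, $a_{22}=a_{12}$, and $a_{20}-a_{10}=a_{11}-a_{22}$, which is precisely \eqref{1.a}. For \eqref{1.cond}, the conditions $\alpha_{11}=a_{10}>0$, $\alpha_{22}=a_{20}>0$, $\alpha_{11}+\beta_{11}=a_{10}+2a_{11}\ge 0$, and $\alpha_{22}+\gamma_{22}=a_{20}+2a_{22}\ge 0$ are immediate from the hypotheses on the $a_{ij}$. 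The remaining strict inequality $\beta_{12}<\alpha_{11}+\min\{\beta_{11},\gamma_{22}\}$ uses $a_{11}-a_{22}\ge 0$ from \eqref{1.a} to replace $\min\{2a_{11},2a_{22}\}$ by $2a_{22}=2a_{12}$, whereupon it collapses to $a_{12}<a_{10}+2a_{12}$, i.e.\ $a_{10}>0$.

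For the reaction, I would set $g_i(u)=b_{i0}-b_{i1}u_1-b_{i2}u_2$ so that $f_i(u)=u_ig_i(u)$ with $g_i$ continuous on $\overline{D}$; \eqref{1.b} then yields
$$
g_i(u)\le b_{i0}-\min\{b_{i1},b_{i2}\}(u_1+u_2)\le 0\quad\text{on }\{u_1+u_2=1\},
$$
and a continuity argument on this compact edge produces the $\eps>0$ needed to satisfy the hypothesis on $f$ in Theorem~\ref{thm.ex}. Applying that theorem then delivers the bounded nonnegative weak solution with $u(x,t)\in\overline{D}$ and the regularity \eqref{1.reg}.

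I do not anticipate any real obstacle, since the whole argument is an algebraic translation between the two parametrizations. The one slightly subtle point is the last continuity step: it is clean when the inequalities in \eqref{1.b} are strict, but in the limiting equality case $g_i$ need not be nonpositive on an open strip $\{1-\eps<u_1+u_2<1\}$, so one would have to perturb the $b_{ij}$ slightly (decreasing $b_{i0}$ by an amount $\delta\to 0$), apply the theorem to the perturbed system, and pass to the limit using the uniform $L^\infty$ bound supplied by the invariance of $\overline{D}$.
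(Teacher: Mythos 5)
Your proposal is correct and follows essentially the same route as the paper: match the SKT coefficients with the linear template \eqref{1.Aij}, observe that the symmetry conditions \eqref{1.symm1}--\eqref{1.symm2} reduce exactly to \eqref{1.a} and that \eqref{1.cond} collapses to $a_{10}>0$, $a_{20}>0$, and then check the sign of $g_i$ near $u_1+u_2=1$ using \eqref{1.b}. Your caveat about the equality case $b_{i0}=\min\{b_{i1},b_{i2}\}$ is in fact a point the paper glosses over (its $\eps_i=1-b_{i0}/\min\{b_{i1},b_{i2}\}$ is then $0$, not in $(0,1)$), and it can be handled either by your perturbation-and-limit argument or, more directly, by verifying Hypothesis H3 itself: wherever $g_i(u)>0$ one has $0<g_i(u)\le\min\{b_{i1},b_{i2}\}\,s$ with $s=1-u_1-u_2$, so the only unbounded contribution $u_ig_i(u)|\log s|\le\min\{b_{i1},b_{i2}\}\,s|\log s|$ stays bounded on $D$.
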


The novelty of this corollary is not the global existence result
(which has been already proven in \cite{ChJu04})
but the uniform boundedness of weak solutions. 
By fixing the numbering of the species, we may assume without loss of generality 
that $a_{20}\ge a_{10}$ (see \eqref{1.a}).
With conditions \eqref{1.a}, the diffusion matrix of the population model becomes
$$
  A(u) = \begin{pmatrix}
	a_{10} + 2a_{11}u_1 + a_{12}u_2 & a_{12}u_1 \\
	a_{11}u_2 & a_{20} + a_{11}u_1 + 2a_{12}u_2
	\end{pmatrix}, \quad\mbox{where }a_{12} = a_{11}+a_{10}-a_{20},
$$
i.e., we are left with three parameters $a_{10}$, $a_{20}$, and $a_{11}$.
Conditions \eqref{1.a} and \eqref{1.b} can be interpreted as follows.
The cross-diffusion coefficient of one species is the same
as the self-diffusion of the other species. Moreover, 
the self-diffusion of species no.\ 1 is larger than that for species no.\ 2 since
$a_{11}-a_{22}=a_{20}-a_{10}\ge 0$.
Condition \eqref{1.b} means that the growth rates $b_{10}$, $b_{20}$ 
are assumed to be not larger than the intra- and inter-specific competition rates.

The population model with \eqref{1.SKT} and \eqref{1.LV} possesses the natural
entropy structure \eqref{1.gf} with $h(u)=a_{12}^{-1}u_1(\log u_1-1)
+ a_{21}^{-1}u_2(\log u_2-1)$;
see \cite{ChJu04} for details. In particular, $(D^2h)A(u)$ is positive semidefinite.
However, its derivative $Dh(u)=(\log u_1,\log u_2)$
is defined on $(0,\infty)^2$, thus not yielding $L^\infty$ bounds.
We propose to employ the entropy density
\begin{equation}\label{1.h}
  h(u) = u_1(\log u_1-1) + u_2(\log u_2-1) + (1-u_1-u_2)(\log(1-u_1-u_2)-1),
\end{equation}
defined on the triangle \eqref{1.D}. The additional term gives the desired bounds
since $Dh(u)=(\log(u_1/(1-u_1-u_2)),\log(u_2/(1-u_1-u_2)))$ is defined
on $D$ which is bounded. However, in order to guarantee the positive semidefiniteness
of $(D^2h)A(u)$ (in fact, we need a slightly stronger property; 
see Section \ref{sec.thm}), we impose conditions \eqref{1.a} and
\eqref{1.b} which are not needed for the global existence analysis. 
We conjecture that these conditions are not necessary 
to prove the boundedness of weak solutions but this is an open problem.

The key idea of the proof of Theorem \ref{thm.ex} is to apply the
general existence result from \cite[Theorem 2]{Jue14}.
Hence, we just need to verify the hypotheses of this theorem.
One of these assumptions states that the matrix $(D^2h)A(u)$ has to be positive
semidefinite (we need a slightly stronger property). 
Although this is only a $2\times 2$ matrix, the proof is not
trivial because we have to deal with twelve parameters $\alpha_{ij}$, $\beta_{ij}$,
and $\gamma_{ij}$. In order to reduce the complexity of the problem, we assume
that $B(w)$ or, equivalently, $(D^2h)A(u)$ is symmetric, motivated by
the Onsager symmetry in non-equilibrium thermodynamics. 
This yields seven conditions, 
and we are left with five parameters. By Sylvester's criterion,
the positive semidefiniteness follows if one of the diagonal terms and the
determinant of $(D^2h)A(u)$ are nonnegative. The corresponding expressions
are quadratic polynomials in $(u_1,u_2)$. The task of finding conditions
on the parameters such that these polynomials are nonnegative can be solved
in principle by Cylindrical Algebraic Decomposition \cite{CaJo98}. 
This yields a complicated set of conditions. Therefore, we prefer another
approach based on the strong maximum principle applied to 
multivariate quadratic polynomials, which leads to \eqref{1.a}.
We stress the fact that the maximum principle
is {\em not} needed to prove the $L^\infty$ bounds but to solve the
algebraic problem.

This note is organized as follows.
Theorem \ref{thm.ex} is proved in Section \ref{sec.thm}, whereas
in Section \ref{sec.coro}, Corollary \ref{coro.ex} is shown.


\section{Proof of Theorem \ref{thm.ex}}\label{sec.thm}

We apply the following theorem from \cite[Theorem 2]{Jue14}, here in a formulation
which is adapted to our situation.

\begin{theorem}[\cite{Jue14}]\label{thm.Jue14}
Let $D\subset(0,1)^2$ be a bounded domain, $u^0\in L^1(\Omega;\R^2)$ with 
$u^0(x)\in D$ for $x\in\Omega$ and assume that
\begin{description}
\item[\rm H1] There exists a convex function $h\in C^2(D;[0,\infty))$ 
such that its derivative $Dh:D\to\R^n$ is invertible.
\item[\rm H2] Let $\alpha^*>0$, $0\le m_i\le 1$ $(i=1,2)$ be such that for all 
$z=(z_1,z_2)^\top\in\R^2$ and $u=(u_1,u_2)^\top\in D$,
$$
  z^\top D^2h(u)A(u)z \ge \alpha^*\sum_{i=1}^2 u_i^{2(m_i-1)}z_i^2.
$$
\item[\rm H3] It holds $A\in C^0(D;\R^{2\times 2})$ and there exists $c_f>0$ 
such that for all $u\in D$, $f(u)\cdot Dh(u)\le c_f(1+h(u))$.
\end{description}
Then there exists a weak solution $u$ to \eqref{1.eq}-\eqref{1.bic}
satisfying $u(x,t)\in\overline{D}$ for $x\in\Omega$, $t>0$ and
\begin{equation*}
  u\in L^2_{\rm loc}(0,\infty;H^1(\Omega;\R^2)), 
	\quad\pa_t u \in L^2_{\rm loc}(0,\infty;H^1(\Omega;\R^2)').
\end{equation*}
The initial datum is satisfied in the sense of $H^1(\Omega;\R^2)'$.
\end{theorem}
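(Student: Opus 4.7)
My plan is to build a weak solution via the entropy-variable framework, which replaces the original unknown $u$ by $w := Dh(u)$. By H1, $Dh: D \to \R^2$ is a $C^1$-diffeomorphism, so setting $u(w) := (Dh)^{-1}(w)$ automatically forces $u(\cdot,t) \in D$ pointwise, yielding the $L^\infty$ bound without any maximum-principle argument. In terms of $w$, the system reads $\pa_t u(w) - \diver(B(w)\na w) = f(u(w))$ with $B(w) := A(u(w))(D^2h(u(w)))^{-1}$. The crucial consequence of H2 is that, via the chain rule $\na u = (D^2h(u))^{-1}\na w$,
\begin{equation*}
  \int_\Omega \na w : B(w)\na w\, dx = \int_\Omega \na u : (D^2h(u))A(u)\na u\, dx \ge \alpha^* \sum_{i=1}^2 \int_\Omega u_i^{2(m_i-1)}|\na u_i|^2\, dx,
\end{equation*}
so even though $B(w)$ is not assumed symmetric or positive definite as a matrix, it is coercive in the entropy variable and, thanks to the weights above, furnishes a gradient bound on $u$ itself.

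I would next introduce an approximate problem combining a semi-implicit time discretization with an elliptic regularization in $w$. For time step $\tau > 0$ and parameter $\eps > 0$, given $u^{k-1}$, I seek $w^k \in H^1(\Omega;\R^2)$ solving
\begin{equation*}
  \frac{1}{\tau}\int_\Omega \bigl(u(w^k) - u^{k-1}\bigr)\cdot\phi\, dx + \int_\Omega \na\phi : B(w^k)\na w^k\, dx + \eps\int_\Omega \bigl(\na w^k : \na\phi + w^k \cdot \phi\bigr)\, dx = \int_\Omega f(u^{k-1})\cdot\phi\, dx
\end{equation*}
for every $\phi \in H^1(\Omega;\R^2)$, and set $u^k := u(w^k) \in D$. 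Existence of $w^k$ follows from the Leray--Schauder fixed-point theorem: linearize by fixing $\bar w$ in the arguments of $B$ and $u$, solve a coercive linear problem on $H^1(\Omega;\R^2)$ (coercivity coming from the $\eps$-term, together with boundedness of $u(\bar w) \in D \subset (0,1)^2$), and verify compactness and a uniform fixed-point-set bound. The latter is the discrete entropy estimate obtained by choosing $\phi = w^k$: convexity of $h$ gives $(u(w^k) - u^{k-1})\cdot w^k \ge h(u(w^k)) - h(u^{k-1})$, hence
\begin{equation*}
  \HH[u^k] + \tau \alpha^* \sum_{i=1}^2 \int_\Omega u_i^{2(m_i-1)}|\na u_i^k|^2\, dx + \eps\tau\bigl(\|\na w^k\|_{L^2}^2 + \|w^k\|_{L^2}^2\bigr) \le \HH[u^{k-1}] + \tau\int_\Omega f(u^{k-1})\cdot w^k\, dx,
\end{equation*}
and H3 combined with a discrete Gr\"onwall argument absorbs the right-hand side to give bounds uniform in $(k,\eps)$.

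From these estimates I would assemble piecewise-constant interpolants $u^{(\tau)}, w^{(\tau)}$ on a fixed interval $(0,T)$. Because $D$ is bounded and $0 \le m_i \le 1$, the weight $u_i^{2(m_i-1)} \ge 1$ on $D$, so H2 delivers a genuine $L^2(0,T;H^1(\Omega;\R^2))$ bound on $u^{(\tau)}$ uniformly in $\tau, \eps$. The flux identity $B(w^k)\na w^k = A(u^k)\na u^k$ together with continuity of $A$ on $\overline D$ produces an $L^2$ bound on the flux, which via the weak formulation yields an $H^1(\Omega;\R^2)'$ bound on the discrete time derivative. An Aubin--Lions-type compactness lemma for piecewise-constant-in-time sequences then gives strong convergence $u^{(\tau)} \to u$ in $L^2(\Omega\times(0,T);\R^2)$ and consequently $u(x,t) \in \overline D$ by closedness; combined with weak convergence of $\na u^{(\tau)}$ in $L^2$ and pointwise a.e.\ convergence of $u^{(\tau)}$, the limit of $A(u^{(\tau)})\na u^{(\tau)}$ is identified with $A(u)\na u$. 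Passing first $\eps\to 0$ and then $\tau \to 0$ in the weak formulation, and noting that the $\eps$-terms vanish by their explicit control, yields the claimed weak solution; the initial datum is attained in $H^1(\Omega;\R^2)'$ by virtue of the time-derivative bound.

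The step I expect to be hardest is the simultaneous existence and a priori estimate for the approximate problem: because $A(u)$ need not be symmetric or positive definite, neither the linearization nor the test-function manipulation can be carried out in the variable $u$, and one must work consistently in the entropy variable $w$ while balancing two competing monotone mechanisms, the entropic dissipation from $B(w)$ and the elliptic regularization from $\eps$. Ensuring that the entropy estimate actually delivers a useful gradient bound on $u$ (and not merely on $w$, which may blow up near $\pa D$) is precisely why H2 is formulated with the weights $u_i^{2(m_i-1)}$, and preserving this information through the double limit $(\eps,\tau) \to (0,0)$ is the core technical point.
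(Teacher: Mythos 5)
First, a point of comparison: the paper does not prove Theorem~\ref{thm.Jue14} at all --- it is imported verbatim from \cite{Jue14} and used as a black box (the paper's actual work consists in verifying H1--H3 for the specific entropy \eqref{1.h} and the linear diffusivities \eqref{1.Aij}). So your proposal is really a reconstruction of the proof in the cited reference, and in outline it matches that proof faithfully: entropy variables $w=Dh(u)$ to get $u\in D$ for free, implicit Euler in time plus an $\eps$-regularization of $-\Delta w+w$ type to make the problem coercive in $H^1(\Omega;\R^2)$, Leray--Schauder for the discrete nonlinear problem, the convexity inequality $(u^k-u^{k-1})\cdot Dh(u^k)\ge h(u^k)-h(u^{k-1})$ for the discrete entropy estimate, the observation that $u_i^{2(m_i-1)}\ge 1$ on $D\subset(0,1)^2$ turns H2 into a genuine $L^2(0,T;H^1)$ bound on $u$, and Aubin--Lions to pass to the limit. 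That is the correct architecture.

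There is, however, one concrete step that fails as written: you treat the reaction term explicitly, putting $f(u^{k-1})$ on the right-hand side, and then claim that ``H3 combined with a discrete Gr\"onwall argument absorbs'' $\tau\int_\Omega f(u^{k-1})\cdot w^k\,dx$. Hypothesis H3 controls $f(u)\cdot Dh(u)$ only when both factors are evaluated at the \emph{same} $u$; the mismatched pairing $f(u^{k-1})\cdot Dh(u^k)$ is not covered, and it cannot be bounded by $\|f\|_{L^\infty}\|w^k\|_{L^1}$ in any way that survives the limit, because $w^k=Dh(u^k)$ is in general unbounded near $\pa D$ (for \eqref{1.h} it behaves like $\log(u_i/(1-u_1-u_2))$) and the only available control on $\|w^k\|_{L^2}$ is the $\eps$-weighted term, which degenerates as $\eps\to0$. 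The repair is exactly what \cite{Jue14} does: treat the source implicitly as $f(u(w^k))$, so that the test function $w^k$ pairs with $Dh$ of the same argument and H3 gives $\int_\Omega f(u^k)\cdot w^k\,dx\le c_f\int_\Omega(1+h(u^k))\,dx$, after which the discrete Gr\"onwall lemma applies; this costs nothing in the Leray--Schauder step since $f\circ u$ is a bounded continuous function of $w$. The remaining points you leave implicit (boundedness of $B(\bar w)=A(u(\bar w))(D^2h(u(\bar w)))^{-1}$ for the linear subproblem, identification of $A(u^{(\tau)})\na u^{(\tau)}$ in the limit via a.e.\ convergence plus weak gradient convergence, attainment of the initial datum from the dual estimate on $\pa_t u$) are standard and unproblematic here.
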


Choosing the entropy function \eqref{1.h}
defined on $D$ (see \eqref{1.D}), we see that Hypothesis H1 is satisfied.
It remains to verify Hypotheses H2 and H3.

\subsection{Verification of Hypothesis H2}

Let $H(u)=(D^2h)(u)$.
As explained in the introduction, 
we require that the matrix $H(u)A(u)$ is symmetric. 
This leads to conditions \eqref{1.symm1}-\eqref{1.symm2},
and we are left with the five parameters $\alpha_{11}$, $\alpha_{22}$, $\beta_{11}$,
$\beta_{12}$, and $\gamma_{22}$. We prove that $H(u)A(u)$ is positive definite
under additional assumptions.

\begin{lemma}\label{lem.HA}
Let conditions \eqref{1.symm1}-\eqref{1.cond} hold.
Then there exists $\eps>0$ such that for all $z\in\R^2$ and all $u\in D$,
\begin{equation}\label{a.H2}
  z^\top H(u)A(u)z \ge \eps\left(\frac{z_1^2}{u_1}+\frac{z_2^2}{u_2}\right).
\end{equation}
\end{lemma}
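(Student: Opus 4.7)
The plan is to verify \eqref{a.H2} via Sylvester's criterion applied to the symmetric matrix $M_\eps(u) := H(u)A(u) - \eps\operatorname{diag}(u_1^{-1}, u_2^{-1})$. Since $HA$ is symmetric under \eqref{1.symm1}-\eqref{1.symm2}, \eqref{a.H2} is equivalent to $M_\eps$ being positive semidefinite, which for a symmetric $2\times 2$ matrix reduces to the two scalar inequalities $(M_\eps)_{11} \ge 0$ and $\det M_\eps \ge 0$ on $D$. Using \eqref{1.h} one computes $H(u) = \operatorname{diag}(u_1^{-1}, u_2^{-1}) + u_0^{-1}\mathbf{1}\mathbf{1}^\top$ with $u_0 := 1-u_1-u_2$ and the useful identity $\det H = (u_0 u_1 u_2)^{-1}$, and \eqref{1.symm1}-\eqref{1.symm2} reduce $A(u)$ to a function of the five free parameters $\alpha_{11}, \alpha_{22}, \beta_{11}, \beta_{12}, \gamma_{22}$.

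Multiplying each Sylvester inequality by the positive weight $u_0 u_1 u_2$ converts it into a nonnegativity assertion for a polynomial of degree at most two in $(u_1, u_2)$ on the closed triangle $\overline D$. For the first, I would use the decomposition $(HA)_{11} = A_{11}/u_1 + (A_{11}+A_{21})/u_0$; the resulting weighted quadratic $P_1$ has no $u_1^2$ term, is therefore affine in $u_1$ for each fixed $u_2$, and attains its minimum on the two edges $\{u_1=0\}$ and $\{u_0=0\}$. Direct evaluation there reduces the problem to the vertex quantities $\alpha_{11}$, $\alpha_{11}+\gamma_{22}-\beta_{12}$, $\alpha_{11}+\beta_{11}$, and $\alpha_{22}+\gamma_{22}$, which \eqref{1.cond} makes (strictly) nonnegative, leaving slack to accommodate a small $\eps > 0$.

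For the determinant inequality I expand
$$\det M_\eps = \det(HA) - \eps\frac{(HA)_{22}}{u_1} - \eps\frac{(HA)_{11}}{u_2} + \frac{\eps^2}{u_1 u_2}$$
and use $\det(HA) = \det A/(u_0 u_1 u_2)$ to see that $u_0 u_1 u_2\,\det M_\eps$ is again a quadratic polynomial $P_2(u_1, u_2)$. Here the argument becomes genuinely algebraic: I would invoke the strong minimum principle, noting that $\Delta P_2$ is a constant, so once its sign is nonpositive the minimum of $P_2$ is attained on $\partial D$, reducing the problem to three univariate checks on the edges that again involve the vertex quantities above. Controlling the sign of $\Delta P_2$ and the resulting boundary quadratics through the five free parameters is the main obstacle; the strict inequality $\beta_{12} < \alpha_{11} + \min\{\beta_{11}, \gamma_{22}\}$ in \eqref{1.cond} is expected to enter precisely at this step, providing the margin needed to close the vertex computations and sustain a positive $\eps$.
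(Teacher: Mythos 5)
Your overall framework (Sylvester's criterion for $M_\eps$, clearing denominators by $u_0u_1u_2$ to get quadratics on $\overline D$, checking the edges, and handling the interior by a concavity/maximum-principle device) matches the paper's in spirit, and your treatment of the $(1,1)$ entry via affineness in $u_1$ is sound. But the determinant step, which you yourself flag as ``the main obstacle,'' contains a genuine gap: the sign condition $\Delta P_2\le 0$ that your strong minimum principle requires is simply \emph{false} in general under \eqref{1.symm1}--\eqref{1.cond}. Since $u_0u_1u_2\det M_\eps=\det(A-\eps P)$ with $P$ the segregation matrix, $P_2$ is (a perturbation of) $\det A$, and $\tfrac12\Delta\det A=\beta_{11}\beta_{22}+\gamma_{11}\gamma_{22}$; taking for instance $\alpha_{11}=\alpha_{22}=1$, $\beta_{12}=0$, $\beta_{11}=\gamma_{22}=5$ (admissible under \eqref{1.cond}) gives $\Delta\det A=100>0$, so the Laplacian-based minimum principle does not apply. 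The paper's argument at this point is different and weaker in its hypothesis: it computes that the constant Hessian $C=D^2\det A$ satisfies $\det C=-\bigl(\beta_{11}\beta_{12}+\gamma_{22}(\alpha_{11}-\alpha_{22}-\beta_{12})\bigr)^2\le 0$, so $C$ has a nonpositive \emph{eigenvalue} (rather than nonpositive trace); restricting $\det A$ to segments of $D$ parallel to the corresponding eigenvector yields concave one-variable functions, whose minima sit on $\pa D$ where nonnegativity is verified directly. Superharmonicity is strictly stronger than having one nonpositive Hessian eigenvalue, and it is only the latter that the structure guarantees.

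Two further points. First, your reduction of positive semidefiniteness to ``$(M_\eps)_{11}\ge 0$ and $\det M_\eps\ge 0$'' is not correct for \emph{semi}definiteness (consider $\operatorname{diag}(0,-1)$); you must either establish strict positivity of a diagonal entry or also verify $(M_\eps)_{22}\ge 0$ — this is exactly why Step 2 of the paper's Lemma \ref{lem.HA2} tracks both diagonal entries and treats the degenerate cases $(HA)_{ii}\equiv 0$ separately. Second, you would save yourself all the $\eps$-bookkeeping by noticing the paper's key identity $\Lambda=HP$, which gives $HA-\eps\Lambda=H(A-\eps P)$ with $A-\eps P$ of the same affine form but shifted coefficients $\alpha_{ii}\mapsto\alpha_{ii}-\eps$, $\beta_{11},\beta_{12},\gamma_{22}\mapsto\beta_{11}+\eps,\beta_{12}+\eps,\gamma_{22}+\eps$; the combinations $\alpha_{11}+\beta_{11}$ and $\alpha_{22}+\gamma_{22}$ are invariant under this shift, and the strict inequalities $\alpha_{11},\alpha_{22}>0$ and $\beta_{12}<\alpha_{11}+\min\{\beta_{11},\gamma_{22}\}$ in \eqref{1.cond} provide exactly the room needed to apply the unperturbed semidefiniteness characterization to the shifted parameters for small $\eps>0$.
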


The lemma shows that Hypothesis H2 is fulfilled with $m_i=\frac12$.
First, we show the following result.

\begin{lemma}\label{lem.HA2}
The matrix $H(u)A(u)$ is positive semidefinite for all $u\in D$ if and only if
\begin{equation}\label{a.cond}
  \alpha_{11}\ge 0,\quad \alpha_{22}\ge 0, \quad 
	\beta_{12}\le\alpha_{11}+\min\{\beta_{11},\gamma_{22}\}, \quad
	\alpha_{11}+\beta_{11}\ge 0, \quad \alpha_{22}+\gamma_{22}\ge 0.
\end{equation}
\end{lemma}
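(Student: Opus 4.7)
The plan is to apply Sylvester's criterion: a symmetric $2\times 2$ matrix is positive semidefinite if and only if both of its diagonal entries and its determinant are nonnegative. Since $H(u)A(u)$ is symmetric by \eqref{1.symm1}-\eqref{1.symm2}, the lemma reduces to showing that
\[
(HA)_{11}(u)\ge 0,\quad (HA)_{22}(u)\ge 0,\quad \det\bigl(H(u)A(u)\bigr)\ge 0\quad\text{for all }u\in D
\]
is equivalent to \eqref{a.cond}. A direct differentiation of \eqref{1.h} gives
\[
H(u)=\operatorname{diag}(1/u_1,1/u_2)+s^{-1}J,
\]
with $s=1-u_1-u_2$ and $J$ the $2\times 2$ all-ones matrix, so $\det H(u)=1/(u_1u_2s)$ and consequently $\det(HA)=\det A/(u_1u_2s)$. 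Multiplying through by the positive factors $u_1s$, $u_2s$, $u_1u_2s$, the three Sylvester conditions take the polynomial form
\begin{align*}
P_1(u)&:=(1-u_2)A_{11}(u)+u_1A_{21}(u)\ge 0,\\
P_2(u)&:=(1-u_1)A_{22}(u)+u_2A_{12}(u)\ge 0,\\
\Delta(u)&:=\det A(u)\ge 0,
\end{align*}
all to be verified on $\overline D$.

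For the necessity direction I would simply restrict $P_1$ and $P_2$ to the edges of $\overline D$. On $u_1=0$, using \eqref{1.symm1}-\eqref{1.symm2}, $P_1$ factors as $(1-u_2)[\alpha_{11}+(\gamma_{22}-\beta_{12})u_2]$, whose nonnegativity on $u_2\in[0,1]$ forces $\alpha_{11}\ge 0$ and $\beta_{12}\le\alpha_{11}+\gamma_{22}$; on $u_2=0$, $P_2$ factors analogously and forces $\alpha_{22}\ge 0$ and $\beta_{12}\le\alpha_{11}+\beta_{11}$; and evaluating at the vertices $(1,0)$ and $(0,1)$ yields $\alpha_{11}+\beta_{11}\ge 0$ and $\alpha_{22}+\gamma_{22}\ge 0$. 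These are precisely the inequalities in \eqref{a.cond}.

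For the sufficiency direction, under \eqref{a.cond} the nonnegativity of $P_1$ and $P_2$ on $\overline D$ follows easily because each is linear in one of the variables (for instance $P_1$ is linear in $u_1$, since $A_{21}$ depends only on $u_2$), so the minimum over $u_1\in[0,1-u_2]$ is attained at an endpoint and reduces to a univariate quadratic on $[0,1]$ which is nonnegative by the edge computations above. The real work is $\Delta\ge 0$: this is a genuine bivariate quadratic with a nontrivial mixed term. I would use the elementary observation alluded to in the introduction -- that a bivariate quadratic polynomial on a closed triangle attains its minimum either at a vertex, at a critical point of its restriction to an edge, or at an interior critical point -- to reduce the problem to checking three univariate quadratics on $[0,1]$ (easy under \eqref{a.cond}) plus a single interior point. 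Verifying that the interior critical value of $\Delta$ remains nonnegative under \eqref{a.cond}, and in particular that the upper bound $\beta_{12}\le\alpha_{11}+\min\{\beta_{11},\gamma_{22}\}$ is exactly what is required there, is the technical heart of the argument and the main obstacle I anticipate.
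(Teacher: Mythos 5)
Your reduction via Sylvester's criterion, the computation $\det H=1/(u_1u_2(1-u_1-u_2))$, and the necessity direction (reading off \eqref{a.cond} from the restrictions of $P_1$, $P_2$ to the edges and vertices of $\overline D$) all match the paper in substance, and your treatment of the diagonal entries in the sufficiency direction is actually a clean alternative: observing that $P_1$ is affine in $u_1$ and $P_2$ affine in $u_2$ pushes their minima to the edges directly, whereas the paper gets the same conclusion by noting $\Delta_{(u_2,u_3)}f_1=-\Delta_{(u_1,u_3)}f_2$ is constant and applying the strong maximum principle to whichever diagonal entry is superharmonic (and then only needs \emph{one} positive diagonal entry plus $\det\ge 0$).

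However, there is a genuine gap exactly where you flag one: you never establish $\det A\ge 0$ in the interior of $D$, and the plan you sketch (enumerate vertices, edge critical points, and an interior critical point, then ``verify that the interior critical value remains nonnegative under \eqref{a.cond}'') is left entirely uncarried out at its decisive step. The missing idea is the following: since $\det A$ is a quadratic polynomial in $u$, its Hessian $C=D^2\det A$ is a constant symmetric matrix, and a direct computation gives
$$
  \det C = -\big(\beta_{11}\beta_{12}+\gamma_{22}(\alpha_{11}-\alpha_{22}-\beta_{12})\big)^2 \le 0 .
$$
Hence $C$ has a nonpositive eigenvalue with eigenvector $v$, so $r\mapsto\det A(u+rv)$ is concave on every chord of $D$ in the direction $v$ and attains its minimum at the chord's endpoints on $\pa D$, where you have already shown $\det A\ge 0$. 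This single identity eliminates the interior critical point entirely (equivalently: it shows an interior critical point can never be a nondegenerate local minimum, so your enumeration would collapse to the boundary checks), and it is precisely the ``technical heart'' you correctly anticipate but do not supply. Without it, or some equivalent argument, the sufficiency half of the lemma is not proved.
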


\begin{proof}
{\em Step 1: Equations \eqref{a.cond} are necessary.}
We first prove that the positive semidefiniteness of $H(u)A(u)$ implies
\eqref{a.cond} by studying $H(u)A(u)$ close to the vertices of $D$. To this end,
we define the matrix-valued functions
\begin{align*}
  & F_1(s)=s H(s,s)A(s,s), \quad F_2(s) = sH(1-2s,s)A(1-2s,s), \\
	& F_3(s) = sH(s,1-2s)A(s,1-2s) \quad\mbox{for } s\in(0,\tfrac12).
\end{align*}
A straightforward computation shows that
\begin{align*}
  \lim_{s\to 0+}F_1(s) &= \begin{pmatrix} \alpha_{11} & 0 \\ 0 & \alpha_{22}
	\end{pmatrix}, \quad
  \lim_{s\to 0+}F_2(s) = \begin{pmatrix} 
	\alpha_{11}+\beta_{11} & \alpha_{11}+\beta_{11} \\
	\alpha_{11}+\beta_{11} & 2(\alpha_{11}+\beta_{11})-\beta_{12} \end{pmatrix}, \\
	\lim_{s\to 0+}F_3(s) &= \begin{pmatrix} 
	\alpha_{11}+\alpha_{22}+2\gamma_{22}-\beta_{12} & \alpha_{22}+\gamma_{22} \\
	\alpha_{22}+\gamma_{22} & \alpha_{22}+\gamma_{22}
	\end{pmatrix}.
\end{align*}
Since $H(u)A(u)$ is assumed to be positive semidefinite on $D$, also
$\lim_{s\to 0+}F_i(s)$ must be positive semidefinite for $i=1,2,3$.
Sylvester's criterion applied to these matrices yields \eqref{a.cond} since
\begin{align*}
  \det\big(\lim_{s\to 0+}F_2(s)\big) &= (\alpha_{11}+\beta_{11})
	(\alpha_{11}+\beta_{11}-\beta_{12}) \ge 0, \\
  \det\big(\lim_{s\to 0+}F_3(s)\big) &= (\alpha_{22}+\gamma_{22})
	(\alpha_{11}+\gamma_{22}-\beta_{12}) \ge 0.
\end{align*}

{\em Step 2: Sign of the diagonal elements of $HA$.}
Let conditions \eqref{a.cond} hold.
We claim that either $HA=H(u)A(u)$ is positive semidefinite or one of the
two coefficients $(HA)_{11}$ or $(HA)_{22}$ is positive in $D$.
For this, we introduce the functions
\begin{align*}
  f_1(u_2,u_3) &= (1-u_2-u_3)u_3(HA)_{11}(1-u_2-u_3,u_2), \quad (u_2,u_3)\in D, \\
	f_2(u_1,u_3) &= (1-u_1-u_3)u_3(HA)_{22}(u_1,1-u_1-u_3), \quad (u_1,u_3)\in D.
\end{align*}
We wish to apply the strong maximum principle to $f_1$ and $f_2$. In fact,
$f_1$ and $f_2$ are nonnegative on $\pa D$ since \eqref{a.cond} implies that
\begin{align}
  f_1|_{u_3=1-u_2} &= (1-u_2)\big(\alpha_{11}+(\gamma_{22}-\beta_{12})u_2\big)
	\ge \alpha_{11}(1-u_2)^2 \ge 0, \label{a.f11} \\
	f_1|_{u_2=0} &= \alpha_{11} + \beta_{11}(1-u_3) \ge \alpha_{11}u_3 \ge 0, 
	\label{a.f12} \\
	f_1|_{u_3=0} &= (1-u_2)\big((\alpha_{11}+\beta_{11})(1-u_2)
	+ \alpha_{22}+\gamma_{22}\big) \ge 0, \label{a.f13} \\
	f_2|_{u_1=0} &= \alpha_{22} + \gamma_{22}(1-u_2) \ge \alpha_{22}u_2 \ge 0, 
	\label{a.f21} \\
	f_2|_{u_3=1-u_1} &= (1-u_1)\big(\alpha_{22}(1-u_1) 
	+ (\alpha_{11}+\beta_{11}-\beta_{12})u_1\big) \ge \alpha_{22}(1-u_1)^2 \ge 0, 
	\label{a.f22} \\
	f_2|_{u_3=0} &= (1-u_1)\big((\alpha_{22}+\gamma_{22})(1-u_1)
	+ (\alpha_{11}+\beta_{11})u_1\big) \ge 0. \label{a.f23} 
\end{align}
Furthermore, a straightforward computation gives
$$
  \Delta_{(u_2,u_3)}f_1 = -\Delta_{(u_1,u_3)}f_2 
	= 2(\alpha_{11}-\alpha_{22}+\beta_{11}-\gamma_{22}) \quad\mbox{in }D.
$$
Consequently, either $\Delta_{(u_2,u_3)}f_1\le 0$ or $\Delta_{(u_1,u_3)}f_2\le 0$
in $D$. By the strong maximum principle, there exists $i\in\{1,2\}$ such that
$f_i>0$ in $D$ unless $f_i\equiv 0$ in $D$. This means that $(HA)_{ii}>0$ in $D$
unless $(HA)_{ii}\equiv 0$ in $D$.

To complete the claim, we show that if one of the coefficients $(HA)_{11}$ or
$(HA)_{22}$ is identically zero in $D$, then $HA$ is positive semidefinite in $D$.
Consider first the case $(HA)_{11}\equiv 0$ in $D$, i.e.\ $f_1\equiv 0$ in $D$.
Then also $f_1\equiv 0$ on $\pa D$. We deduce from \eqref{a.f11}-\eqref{a.f13}
the relations $\alpha_{11}=\beta_{11}=0$, $\alpha_{22}=-\gamma_{22}$,
and $\gamma_{22}=\beta_{12}$ and so, 
$$
  HA = \alpha_{22}\begin{pmatrix} 0 & 0 \\ 0 & 1/u_2 \end{pmatrix}.
$$
Since $\alpha_{22}\ge 0$, $HA$ is positive semidefinite. In the remaining case
$(HA)_{22}\equiv 0$ in $D$, \eqref{a.f21}-\eqref{a.f23} lead to
$$
  HA = \alpha_{11}\begin{pmatrix} 1/u_1 & 0 \\ 0 & 0 \end{pmatrix},
$$
and because of $\alpha_{11}\ge 0$, this matrix is positive semidefinite.
This shows the claim.

{\em Step 3: Sign of the determinant of $HA$.}
By Step 2, we can assume that one of the two coefficients $(HA)_{11}$ or 
$(HA)_{22}$ is positive in $D$. We show that $\det A\ge 0$ in $D$. Then
$\det(HA)=\det H\det A\ge 0$ in $D$, and by Sylvester's criterion,
these properties give the positive semidefiniteness of $HA$. This proves
that conditions \eqref{a.cond} are sufficient for the positive semidefiniteness
of $HA$.

We consider first $\det A$ on $\pa D$. Taking into account conditions \eqref{a.cond},
we find that
\begin{align*}
  \det A(0,u_2) &= (\alpha_{22}+\gamma_{22}u_2)\big(\alpha_{11}
	+(\gamma_{22}-\beta_{12})u_2\big) 
	\ge \alpha_{22}(1-u_2)\alpha_{11}(1-u_2) \ge 0, \\
	\det A(u_1,0) &= (\alpha_{11}+\beta_{11}u_1)\big(\alpha_{22}(1-u_1)
	+ (\alpha_{11}+\beta_{11}-\beta_{12})u_1\big) \\
	&\ge \alpha_{22}(1-u_1)\alpha_{11}(1-u_1) \ge 0, \\
	\det A(u_1,1-u_1) &= \big((\alpha_{22}+\gamma_{22})(1-u_1)+\alpha_{11}+\beta_{11}
	\big) \\
	&\phantom{xx}{}\times\big(\alpha_{11}-\beta_{12}+\gamma_{22}
	+(\beta_{11}-\gamma_{22})u_1\big) \\
	&\ge (\alpha_{11}+\beta_{11})\big(-\min\{\beta_{11}-\gamma_{22},0\}
	+(\beta_{11}-\gamma_{22})u_1\big) \ge 0.
\end{align*}
We conclude that $\det A\ge 0$ on $\pa D$. 

Next, we consider the Hessian $C=D^2\det A(u)$ with respect to $u$. Since
$\det A$ is a (multivariate) quadratic polynomial in $u$, 
$C$ is a symmetric constant matrix satisfying
$$
  \det C = -\big(\beta_{11}\beta_{12}+\gamma_{22}(\alpha_{11}-\alpha_{22}-\beta_{12})
	\big)^2 \le 0.
$$
Thus, one of the two eigenvalues of $C$ is nonpositive, say $\lambda\le 0$.
Let $v\in\R^2\backslash\{0\}$ be a corresponding eigenvector, i.e.\ $Cv=\lambda v$.

Let $u\in D$ be arbitrary and let $I_u\subset\R$ be the (unique) bounded open
interval containing zero with the property that the segment $u+I_uv$ is contained
in $D$ and its extreme points belong to $\pa D$. Define $\phi(r)=\det A(u+rv)$
for $r\in I_u$. Then $\phi''(r)=v^\top Cv=\lambda|v|^2\le 0$ for all $r\in I_u$.
We infer that $\phi$ is concave and attains its minimum at the border of $I_u$.
Since $\det A\ge 0$ on $\pa D$, this implies that $\det A(u+rv)\ge 0$
for all $r\in I_u$. By choosing $r=0\in I_u$, we conclude that $\det A(u)\ge 0$.
As $u\in D$ was arbitrary, this finishes the proof.
\end{proof}

\begin{proof}[Proof of Lemma \ref{lem.HA}]
The claim \eqref{a.H2} is equivalent to the positive semidefiniteness of the
matrix $HA-\eps\Lambda$ for a suitable $\eps>0$, where
$$
  \Lambda = \begin{pmatrix} 1/u_1 & 0 \\ 0 & 1/u_2 \end{pmatrix}.
$$
Since $\Lambda=HP$, where
$$
  P = \begin{pmatrix} 1-u_1 & -u_1 \\ -u_2 & 1-u_2 \end{pmatrix},
$$
we can write $HA-\eps\Lambda=HA^\eps$ with $A^\eps=A-\eps P$. We observe that
$A^\eps$ has the same structure as $A$ with the parameters
$$
  \alpha_{11}^\eps = \alpha_{11}-\eps, \quad \alpha_{22}^\eps=\alpha_{22}-\eps,
	\quad \beta_{11}^\eps = \beta_{11}+\eps, \quad 
	\beta_{12}^\eps = \beta_{12} + \eps, 
	\quad \gamma_{22}^\eps = \gamma_{22} + \eps.
$$
>From Lemma \ref{lem.HA} we conclude that $HA^\eps$ is positive semidefinite if and
only if \eqref{a.cond} holds for the parameters 
$(\alpha_{11}^\eps,\alpha_{22}^\eps,\beta_{11}^\eps,\beta_{12}^\eps,\gamma_{22}^\eps)$ 
instead of $(\alpha_{11},\alpha_{22},\beta_{11},\beta_{12},\gamma_{22})$.
This means that $HA-\eps\Lambda$ is positive semidefinite for a suitable $\eps>0$
if and only if \eqref{1.a} hold.
\end{proof}

\begin{remark}\label{rem}\rm
Let $\alpha_{11}=\alpha_{22}=0$ but $\beta_{11}>0$ and $\gamma_{22}>0$. 
We claim that there exists $\eps>0$ such that for all $z\in\R^2$ and $u\in D$,
$$
  z^\top H(u)A(u)z \ge \eps|z|^2
$$
holds, i.e., Hypothesis H2 is satisfied for $m_i=1$, and the conclusion of
Theorem \ref{thm.ex} holds. We show that
$HA-\eps{\mathbb I}$ is positive semidefinite, where ${\mathbb I}$ is the
identity matrix in $\R^{2\times 2}$. The matrix can be written as
$$
  HA-\eps{\mathbb I} = (HA)^\eps + \frac{\eps}{1-u_1-u_2}
	\begin{pmatrix} 1 & 1 \\ 1 & 1 \end{pmatrix},
$$
where $(HA)^\eps$ has the same structure as $HA$ but with
$\beta_{11}$, $\beta_{12}$, $\gamma_{22}$ replaced by 
$\beta_{11}^\eps=\beta_{11}-\eps$, $\beta_{12}^\eps=\beta_{12}-\eps$,
$\gamma_{22}^\eps=\gamma_{22}-\eps$.
Choosing $0<\eps\le\min\{\beta_{11},\gamma_{22}\}$, conditions \eqref{a.cond}
are satisfied for these parameters. Thus, Lemma \ref{lem.HA} shows that
$(HA)^\eps$ is positive semidefinite and we conclude that also
$HA-\eps{\mathbb I}$ is positive semidefinite, proving the claim.
\qed
\end{remark}


\subsection{Verification of H3}

By definition of $f_i$, we write
$$
  f_i(u)(Dh)_i(u) = u_ig_i(u)\log u_i - u_ig_i(u)\log(1-u_1-u_2).
$$
Since $g_i(u)$ and $u_i\log u_i$ are bounded in $\overline{D}$, the first
term on the right-hand side is bounded. The second term is bounded
in $\{0<u_1+u_2\le 1-\eps\}$ by a constant which depends on $\eps$.
Moreover, we have $g_i(u)\le 0$ in $\{1-\eps<u_1+u_2<1\}$ by assumption,
which implies that $-u_ig_i(u)\log(1-u_1-u_2)\le 0$ in $\{1-\eps<u_1+u_2<1\}$.
Thus, $f_1(u)(Dh)_1(u)\le c$ for a suitable constant $c>0$.


\section{Proof of Corollary \ref{coro.ex}}\label{sec.coro}

The corollary follows from Theorem \ref{thm.Jue14} and Theorem \ref{thm.ex}
by specifying the diffusivities according to \eqref{1.SKT}.
The requirement of the symmetry of $H(u)A(u)$ leads to the conditions
$a_{11}=a_{21}$, $a_{22}=a_{12}$, and $a_{20}-a_{10} = a_{11}-a_{22}$, whereas 
\eqref{1.cond} becomes $a_{10}>0$, $a_{20}>0$, and 
$-a_{12}< a_{10}+2\min\{a_{20}-a_{10},0\}$. Taking into account that 
$a_{10}\le a_{20}$, the last condition is equivalent to $-a_{12}< a_{10}$, 
and this inequality holds since $a_{10}$ is positive.
Finally, Hypothesis H3 follows from the inequality 
$g_i(u)=b_{i0}-b_{i1}u_1-b_{i2}u_2\le b_{i0}-\min\{b_{i1},b_{i2}\}(u_1+u_2)\le 0$ 
for $1-\eps<u_1+u_2<1$, where $\eps=\min\{\eps_1,\eps_2\}$ and
$\eps_i=1-b_{i0}/\min\{b_{i1},b_{i2}\}\in(0,1)$.


\end{document}